\newtheorem{lemma}{Lemma}
\newtheorem{theorem}{Theorem}
\theoremstyle{definition}
\newtheorem{definition}{Definition}
\newtheorem{remark}{Remark}
\begin{document}

\title{Reidemeister classes, wreath products and solvability}

\author{Evgenij Troitsky}
\thanks{The work is supported by the Russian Science Foundation under grant 21-11-00080.}
\address{Moscow Center for Fundamental and Applied Mathematics, MSU Department,\newline
	Dept. of Mech. and Math., Lomonosov Moscow State University, 119991 Moscow, Russia}
\email{troitsky@mech.math.msu.su}
\keywords{Reidemeister number,  
twisted conjugacy class, 
Burnside-Frobenius theorem, 
solvable group,
unitary dual, 
finite-dimensional representation, 
wreath product}
\subjclass[2000]{20C;  
20E45;  
22D10;  
37C25;  
47H10 
}

\begin{abstract}
Reidemeister (or twisted conjugacy) classes are considered in
restricted wreath products of the form $G\wr \mathbb{Z}^k$, where $G$ is a finite group.

For an automorphism $\varphi$ of finite order (supposed to be the same for the torsion subgroup $\oplus G$ and the quotient $\mathbb{Z}^k$) with finite number $R(\varphi)$ of  Reidemeister classes,
this number is identified with the number of equivalence classes of finite-dimensional unitary irreducible  representations  of the product that are fixed by the dual homeomorphism $\widehat{\varphi}$ (i.e. the so-called conjecture TBFT$_f$ is proved in this case).  

For these groups and automorphisms, we prove the following conjecture: if a finitely generated residually finite group has an automorphism with $R(\varphi)<\infty$ then it is solvable-by-finite (so-called conjecture R). 
\end{abstract}

\maketitle

\emph{Reidemeister}, or \emph{twisted conjugacy classes} of an automorphism $\varphi$ of a group $\Gamma$ are equivalence classes with respect to $x\sim yx\varphi(y^{-1})$. Their number
$R(\varphi)$ (finite or infinite) is called \emph{Reidemeister number}. 

The following three directions form the mainstream of the current study of Reidemeister classes:
\begin{enumerate}
\item[1)] To prove or disprove the so-called TBFT (twisted Burnside-Frobenius theory) conjecture:
the Reidemeister number $R(\varphi)$ (if finite) coincides with the number 
of equivalence classes of irreducible unitary representations of $\Gamma$ fixed by 
the induced homeomorphism $\widehat{\varphi}$ of
the unitary dual $\widehat{\Gamma}$. Its finite-dimensional version TBFT$_f$ was also studied (here one considers only finite-dimensional fixed representations).
Also this property can be considered for an individual automorphism.
The most important classes of groups for which TBFT$_f$ is true, are polycyclic-by-finite groups 
\cite{polyc} and residually finite groups of finite Pr\"ufer rank \cite{TroitskyFiniteRank2022}.
On the other hand, in \cite{FeTrZi20} we have detected an example of infinitely generated
residually finite group which has neither TBFT nor TBFT$_f$.  

\item[2)] As an opposite case, to determine classes of groups for which any automorphism has 
infinite Reidemeister number (this property is called $R_\infty$) --- the list of results here is very
extended, we mention only some expository or recent papers: \cite{FelLeonTro,FelTroJGT,FraimanTroitsky,
TroitskyFiniteRank2022,MitraSankaran2022,Tertooy2022}.

\item[3)] To study rationality and other properties of Reidemeister zeta function constructed
from $R(\varphi^n)$ (see e.g. \cite{FelTroNilpFR2021RJMP} for a recent progress).  
\end{enumerate}

We will deal with the first two aspects in this paper. 
Inspired by \cite{Jabara} we have formulated in \cite{FelTroJGT} the following conjecture:

\medskip
\textbf{Conjecture R.} \emph{Let $\Gamma$ be a finitely generated residually finite group. Either $\Gamma$
is $R_\infty$, or $\Gamma$ is solvable-by-finite.}

\medskip
The conjecture was discussed in several papers, in particular in \cite{GoncalvesNasybullov2019CA}.  
The conjecture was supported by the following recent result \cite{TroitskyFiniteRank2022}:
any residually finite group of finite upper rank admitting an automorphism $\varphi$ with finite Reidemeister number $R(\varphi)$ is solvable-by-finite. 

The main results of the present paper are:

\smallskip
\textbf{A)} TBFT$_f$ is true for automorphisms of finite order of a restricted wreath product
$G\wr \mathbb{Z}^k$, where $G$ is an arbitrary finite group
(Theorem \ref{teo:main_TBFT}). The proof needs a technical restriction (\ref{eq:condfinord}),
which is typically fulfilled automatically (see e.g. \cite{TroLamp,troitskii2023reidemeister550080588}).

\smallskip
\textbf{B)} The above conjecture (R) is true in this situation. More specifically, suppose that 
a restricted wreath product $\Gamma=G\wr \mathbb{Z}^k$, where $G$ is an arbitrary finite group,
admits an automorphism $\varphi$ of finite order with $R(\varphi)<\infty$. Then (under the same restriction)
$\Gamma$ is solvable-by-finite (in fact, solvable) (Theorem \ref{teo:ja_wr}).

The statement A) generalizes a part of results of \cite{TroLamp,troitskii2023reidemeister550080588}.

\medskip
Recall some facts to be used in the proofs and give some modifications of them.

Let $\mathbf{F}(\varphi):=\{g\in \Gamma \colon \varphi(g)=g\}$ be the subgroup  of $\varphi$-fixed elements.
We use the notation $\tau_g(x)=gxg^{-1}$ for an inner automorphism and for its restriction to a normal
subgroup.

From the equality
$
yg\varphi(y^{-1})x= ygx x^{-1}\varphi (y^{-1})x= y(gx) (\tau_{x^{-1}} \circ \varphi )(y^{-1}),
$
it follows a very useful statement (see e.g. \cite{polyc}):
\begin{lemma}\label{lem:shift}
Shifts of Reidemeister classes of $\varphi$ are Reidemeister classes of $\tau_{x^{-1}} \circ \varphi$:
$$
\{g\}_\varphi x= \{gx\}_{\tau_{x^{-1}} \circ \varphi}.
$$	
Hence, $R(\tau_g\circ \varphi)=R(\varphi)$.
\end{lemma}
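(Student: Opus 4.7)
The plan is to leverage the algebraic identity displayed immediately before the statement. That identity rewrites an arbitrary element $yg\varphi(y^{-1})x$ of the right-shifted Reidemeister class $\{g\}_\varphi x$ in the form $y(gx)(\tau_{x^{-1}}\circ\varphi)(y^{-1})$, which by definition is a generic element of $\{gx\}_{\tau_{x^{-1}}\circ\varphi}$. So essentially the lemma is a matter of reading this identity in the right way.

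First I would verify the identity explicitly: starting from $yg\varphi(y^{-1})x$, insert $xx^{-1}$ between $g$ and $\varphi(y^{-1})$ to obtain $ygx\cdot x^{-1}\varphi(y^{-1})x = y(gx)\cdot\tau_{x^{-1}}(\varphi(y^{-1})) = y(gx)(\tau_{x^{-1}}\circ\varphi)(y^{-1})$. Since $y$ ranges freely over $\Gamma$ in both parametrizations, the set-level equality $\{g\}_\varphi x=\{gx\}_{\tau_{x^{-1}}\circ\varphi}$ follows at once, which proves the first assertion.

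For the second assertion I would argue that right-multiplication by $x$ is a bijection of $\Gamma$ which, by the equality just established, carries each Reidemeister class of $\varphi$ onto a Reidemeister class of $\tau_{x^{-1}}\circ\varphi$; the inverse map (right-multiplication by $x^{-1}$) reverses the construction, so one obtains an induced bijection between the two sets of classes. Consequently $R(\varphi)=R(\tau_{x^{-1}}\circ\varphi)$ for every $x\in\Gamma$, and substituting $x=g^{-1}$ gives $R(\varphi)=R(\tau_g\circ\varphi)$.

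There is no genuine obstacle here: all the content is packed into the one bracketing identity, and the remainder is a bookkeeping check that the $y$-parametrizations of the two classes match. The only point worth stating explicitly is that right-shift by $x$ is a bijection of all of $\Gamma$ (not merely within a single class), which is what upgrades the class identity to the equality of Reidemeister numbers.
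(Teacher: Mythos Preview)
Your proposal is correct and follows exactly the paper's approach: the paper derives the lemma directly from the displayed identity $yg\varphi(y^{-1})x = y(gx)(\tau_{x^{-1}}\circ\varphi)(y^{-1})$, and you have simply spelled out the reading of that identity and the easy bookkeeping for the equality of Reidemeister numbers.
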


\begin{lemma}\label{lem:Jab_fin_ord}
\emph{(\cite{Jabara})}
Suppose that $\Gamma$ is a residually finite group and $\varphi:\Gamma\to\Gamma$ is an automorphism of finite order with $R(\varphi)<\infty$. Then $|\mathbf{F}({\varphi})|< \infty$.
\end{lemma}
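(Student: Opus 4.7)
I would proceed by contradiction: assume $\mathbf{F}(\varphi)$ is infinite and derive a contradiction with $R(\varphi)<\infty$ and $\varphi^n=\mathrm{id}$.

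The first step is the following elementary identity: any $g\in\mathbf{F}(\varphi)\cap\{e\}_\varphi$ satisfies $g^n=e$. Indeed, writing $g=z\varphi(z^{-1})$ and using $\varphi^k(g)=g$ for every $k$, one has $g=\varphi^k(z)\varphi^{k+1}(z^{-1})$, and the product over $k=0,\ldots,n-1$ telescopes to $g^n=z\varphi^n(z^{-1})=e$. Combining this with a pigeonhole argument on the $R(\varphi)$ Reidemeister classes applied to the consecutive powers $g,g^2,\ldots,g^{R(\varphi)+1}$ of an arbitrary $g\in\mathbf{F}(\varphi)$, and passing via Lemma~\ref{lem:shift} from $\varphi$ to the twisted automorphism $\tau_{g^i}\varphi$, I expect to conclude that every element of $\mathbf{F}(\varphi)$ has finite order, in fact that $\mathbf{F}(\varphi)$ has exponent bounded in terms of $n$ and $R(\varphi)$.

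Next, I exploit residual finiteness together with the observation that, for any finite-index normal subgroup $K_0\triangleleft\Gamma$, the intersection $K=\bigcap_{i=0}^{n-1}\varphi^i(K_0)$ is $\varphi$-invariant, normal, and of finite index. This approximates $\Gamma$ by finite groups $\bar\Gamma=\Gamma/K$ carrying induced finite-order automorphisms $\bar\varphi$ with $R(\bar\varphi)\le R(\varphi)$, and any finite subset of $\mathbf{F}(\varphi)$ injects into $\mathbf{F}(\bar\varphi)$ for sufficiently small $K$. The desired contradiction would follow from a uniform bound on $|\mathbf{F}(\bar\varphi)|$ across such quotients.

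Obtaining such a bound is the main obstacle. A naive estimate purely in terms of $R(\bar\varphi)$ and $n$ is unavailable, as one sees already from $\bar\varphi=\mathrm{id}$ on a finite non-abelian group, where $|\mathbf{F}(\bar\varphi)|=|\bar\Gamma|$ can be much larger than $R(\bar\varphi)=k(\bar\Gamma)$. To overcome this one must transfer structure from the ambient group $\Gamma$: following Jabara's strategy, the aim is to construct a $\varphi$-invariant normal subgroup $N\triangleleft\Gamma$ of finite index which centralizes $\mathbf{F}(\varphi)$, built as an intersection of the centralizers $C_\Gamma(g)$ with $g$ running over the finitely many Reidemeister representatives meeting $\mathbf{F}(\varphi)$; each such centralizer is shown to have finite index by a Burnside-type orbit count in finite quotients exploiting $R(\varphi)<\infty$. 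Once such $N$ is in hand, $\mathbf{F}(\varphi)$ lies in the $\mathrm{FC}$-center of $\Gamma$, and combining this with the bounded exponent from the first step and residual finiteness forces $\mathbf{F}(\varphi)$ to be finite, contradicting the assumption.
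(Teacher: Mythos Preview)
The paper does not prove this lemma; it is simply cited from \cite{Jabara} with no argument given, so there is no in-paper proof to compare against. What follows is an assessment of your sketch on its own terms.

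Your telescoping identity is correct and is indeed one of the ingredients in Jabara's argument. The pigeonhole step, however, has a gap: after obtaining $g^{j-i}\in\mathbf{F}(\tau_{g^i}\varphi)\cap\{e\}_{\tau_{g^i}\varphi}$ you want to reapply the identity to $\psi=\tau_{g^i}\varphi$, but since $\tau_{g^i}$ commutes with $\varphi$ (because $g\in\mathbf{F}(\varphi)$) one has $\psi^n=\tau_{g^{in}}$, which need not be the identity. The telescoping then yields only $g^{jn}=zg^{in}z^{-1}$ for some $z\in\Gamma$, not a bound on the order of $g$. The bounded-exponent conclusion you state is in fact true, but reaching it requires more than you have written.

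The decisive gap is your final paragraph. You are right that there is no uniform bound on $|\mathbf{F}(\bar\varphi)|$ purely in terms of $n$ and $R(\bar\varphi)$, but your proposed remedy --- showing $[\Gamma:C_\Gamma(g)]<\infty$ for Reidemeister representatives $g$ via a ``Burnside-type orbit count in finite quotients'' --- is not substantiated. Orbit counting in a finite quotient gives $\sum_i 1/|\mathbf{F}(\tau_{g_i^{-1}}\bar\varphi)|=1$ over the at most $R(\varphi)$ twisted classes, which forces \emph{some} twisted fixed-point group to have size at most $R(\varphi)$ but says nothing about the ordinary centralizers $C_\Gamma(g)$ having finite index, nor about $|\mathbf{F}(\bar\varphi)|$ itself. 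Your sketch correctly locates the obstacle but does not overcome it; what is missing is exactly the nontrivial content of \cite{Jabara}.
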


\begin{lemma}\label{lem:Jab_fin}
\emph{(Prop. 3.4 in \cite{FelLuchTro})}
Suppose that $\Gamma$ is a finitely generated residually finite group and $\varphi:\Gamma\to\Gamma$ is an automorphism with $R(\varphi)<\infty$. Then $|\mathbf{F}({\varphi})|< \infty$.
\end{lemma}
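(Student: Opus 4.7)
The plan is to derive a contradiction from the assumption $|\mathbf{F}(\varphi)| = \infty$. As a subgroup of the residually finite group $\Gamma$, the fixed subgroup $\mathbf{F}(\varphi)$ is itself residually finite. Landau's classical bound for finite groups (the order is bounded by a function of the number of conjugacy classes), together with residual finiteness, implies that any residually finite group with only finitely many conjugacy classes is finite: the finite quotients would have uniformly bounded order, so the profinite completion, and hence the group itself, would be finite. Thus it suffices to produce a finite bound on $k(\mathbf{F}(\varphi))$, the number of conjugacy classes of $\mathbf{F}(\varphi)$ as an abstract group.

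The key structural observation is that $\varphi$ restricts to the identity on $\mathbf{F}(\varphi)$, so for $h \in \mathbf{F}(\varphi)$ one has $hfh^{-1} = hf\varphi(h)^{-1}$. Hence every $\mathbf{F}(\varphi)$-conjugacy class is contained in a single Reidemeister class of $\Gamma$, producing a well-defined map
\[
\Phi \colon \mathbf{F}(\varphi)/\mathrm{conj} \longrightarrow \{\text{Reidemeister classes of } \varphi\}.
\]
The image of $\Phi$ has at most $R(\varphi) < \infty$ elements, so the task is reduced to bounding the fibers of $\Phi$.

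To analyze the fiber of $\Phi$ over the class containing $f_1 \in \mathbf{F}(\varphi)$, take any $f_2 \in \mathbf{F}(\varphi)$ in the same Reidemeister class, so $f_2 = y f_1 \varphi(y)^{-1}$ for some $y \in \Gamma$. Left multiplication of $y$ by elements $h \in \mathbf{F}(\varphi)$ replaces $f_2$ by $h f_2 h^{-1}$, so the $\mathbf{F}(\varphi)$-conjugacy class of $f_2$ depends only on the coset $\mathbf{F}(\varphi) y$. Applying $\varphi$ to the identity $f_2 = y f_1 \varphi(y)^{-1}$ and using $\varphi(f_1) = f_1$, $\varphi(f_2) = f_2$ yields after a short manipulation the condition $u := y^{-1}\varphi(y) \in \mathbf{F}(\tau_{f_1}\varphi)$, and two choices of $y$ produce the same $u$ exactly when they lie in a common coset $\mathbf{F}(\varphi) y$. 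Hence the fiber of $\Phi$ over the class of $f_1$ has cardinality at most $|\mathbf{F}(\tau_{f_1}\varphi)|$.

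The main obstacle is to obtain a uniform finite bound on $|\mathbf{F}(\tau_g \varphi)|$ as $g$ ranges over $\Gamma$. By Lemma \ref{lem:shift} we have $R(\tau_g \varphi) = R(\varphi)$ for every $g$, so each inner twist satisfies the same finite-Reidemeister hypothesis, but the finiteness of its fixed subgroup is precisely the conclusion of the lemma applied to a different automorphism, so one must avoid a circular appeal. I expect to resolve this by proving the strengthened statement uniformly over the whole orbit $\{\tau_g\varphi : g \in \Gamma\}$ of inner twists simultaneously, either by induction on a suitable complexity invariant of the action (for example the number of $\mathbf{F}(\varphi)$-orbits on Reidemeister classes), or by a diagonal compactness argument in the profinite completion of $\Gamma$. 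Once a uniform bound $|\mathbf{F}(\tau_g \varphi)| \leq C$ is in place, we get $k(\mathbf{F}(\varphi)) \leq C \cdot R(\varphi) < \infty$, and the Landau-based observation above forces $\mathbf{F}(\varphi)$ to be finite, contradicting the assumption.
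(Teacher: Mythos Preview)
The paper does not supply a proof of this lemma; it is quoted from \cite{FelLuchTro}. So there is no in-paper argument to compare against, and I can only assess whether your sketch stands on its own.

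Your argument has a genuine gap at exactly the point you flag. Everything up to and including the fiber analysis is correct: the map $\Phi$ is well defined, and the fiber over the class of $f_1$ injects into $\mathbf{F}(\tau_{f_1}\varphi)$. But to bound $|\mathbf{F}(\tau_{f_1}\varphi)|$ you now need the very statement you are trying to prove, applied to $\tau_{f_1}\varphi$ instead of $\varphi$. You acknowledge this circularity and propose to escape it either by ``induction on a suitable complexity invariant'' or by ``a diagonal compactness argument in the profinite completion,'' but neither of these is specified, and I do not see a natural invariant that strictly decreases when passing from $\varphi$ to $\tau_{f_1}\varphi$. As written, the proof is incomplete: the hard part has been isolated but not done.

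A further warning sign is that your argument nowhere uses the hypothesis that $\Gamma$ is \emph{finitely generated}. Compare with Lemma~\ref{lem:Jab_fin_ord}, which drops finite generation but compensates by assuming $\varphi$ has finite order. The lemma you are trying to prove trades that assumption for finite generation, so any correct proof should exploit finite generation in an essential way (for instance, to control the outer automorphism or to invoke a structural result that fails for infinitely generated groups). The absence of this hypothesis from your sketch strongly suggests the strategy, as stated, cannot close.
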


\begin{lemma}\label{lem:extensions}
\emph{(\cite{FelHill,go:nil1}, see also \cite{polyc})}
Suppose, $\varphi:\Gamma\to \Gamma$ is an automorphism, $H$ is a normal $\varphi$-invariant subgroup of $\Gamma$, so  $\varphi$ induces automorphisms $\varphi':H\to H$ and $\widetilde{\varphi}:\Gamma/H \to \Gamma/H$.
Then
	\begin{itemize}
		\item the projection $\Gamma\to \Gamma/H$ maps Reidemeister classes of $\varphi$ onto Reidemeister classes of $\widetilde{\varphi}$, in particular $R(\widetilde{\varphi})\le R(\varphi)$;

		\item if $|\mathbf{F}(\widetilde{\varphi})|=n$, then $R(\varphi')\le R(\varphi)\cdot n$;

		\item if $\mathbf{F}(\widetilde{\varphi})=\{e\}$, then 		each Reidemeister class of $\varphi'$ is an intersection of the appropriate Reidemeister class of $\varphi$ and $H$; 

		\item if $\mathbf{F}(\widetilde{\varphi})=\{e\}$, then 
		$R(\varphi)=\sum_{j=1}^R R(\tau_{g_j} \circ \varphi')$, where $g_1,\dots g_R$ are some elements of $\Gamma$ such that
		$p(g_1),\dots,p(g_R)$ are representatives of all Reidemeister classes of $\widetilde{\varphi}$, 
$p:\Gamma\to \Gamma/H$ is the natural projection and $R=R(\widetilde{\varphi})$.
	\end{itemize}	
\end{lemma}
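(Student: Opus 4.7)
The plan is to establish the four items in order, all based on the equivariance of the projection $p\colon \Gamma\to \Gamma/H$ with respect to $\varphi$ and $\widetilde{\varphi}$. For the first item, applying $p$ to $yx\varphi(y^{-1})$ yields $p(y)\,p(x)\,\widetilde{\varphi}(p(y))^{-1}$, so $p$ carries $\varphi$-classes into $\widetilde{\varphi}$-classes; surjectivity is immediate by lifting, giving $R(\widetilde{\varphi})\le R(\varphi)$.

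The unifying observation for the second and third items is: if $h,h'\in H$ satisfy $h' = yh\varphi(y^{-1})$ for some $y\in \Gamma$, then projecting gives $p(y) = \widetilde{\varphi}(p(y))$, i.e.\ $p(y)\in \mathbf{F}(\widetilde{\varphi})$. When $\mathbf{F}(\widetilde{\varphi})=\{e\}$ this forces $y\in H$, so each $\varphi$-class of $\Gamma$ that meets $H$ does so in a single $\varphi'$-class, proving the third item. For the second item I would fix lifts $y_1,\dots,y_n$ of the elements of $\mathbf{F}(\widetilde{\varphi})$; then any admissible conjugator has the form $y=y_i h_0$ with $h_0\in H$, and the identity $yh\varphi(y^{-1}) = y_i(h_0 h\varphi(h_0^{-1}))\varphi(y_i^{-1})$ together with normality of $H$ shows that for each $i$ the resulting elements all lie in a single $\varphi'$-class of $H$; hence the intersection of one $\varphi$-class with $H$ contains at most $n$ classes of $\varphi'$, and summing over classes of $\Gamma$ that meet $H$ gives $R(\varphi')\le nR(\varphi)$.

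For the fourth item I would partition the $\varphi$-classes of $\Gamma$ according to the $\widetilde{\varphi}$-class to which they project, writing $R(\varphi) = \sum_{j=1}^R N_j$ where $N_j$ counts the $\varphi$-classes over $\{p(g_j)\}_{\widetilde{\varphi}}$. By Lemma~\ref{lem:shift} the right-shift $x\mapsto xg_j^{-1}$ is a bijection $\{x\}_\varphi \leftrightarrow \{xg_j^{-1}\}_{\varphi_j}$ on Reidemeister classes, where $\varphi_j := \tau_{g_j}\circ \varphi$; and under it the $\varphi$-classes over $\{p(g_j)\}_{\widetilde{\varphi}}$ correspond exactly to the $\varphi_j$-classes over $\{e\}_{\widetilde{\varphi_j}}$ (applying Lemma~\ref{lem:shift} again to $\widetilde{\varphi}$). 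Since $\varphi_j|_H = \tau_{g_j}\circ \varphi'$, invoking the third item for $\varphi_j$ identifies $N_j$ with $R(\tau_{g_j}\circ \varphi')$, and summation over $j$ yields the formula.

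The main delicacy I anticipate is that applying the third item to $\varphi_j$ formally requires $\mathbf{F}(\widetilde{\varphi_j}) = \mathbf{F}(\tau_{\widetilde{g_j}}\circ\widetilde{\varphi}) = \{e\}$, not merely $\mathbf{F}(\widetilde{\varphi})=\{e\}$. When $\Gamma/H$ is abelian---which is the only case actually used in the rest of the paper, since there $\Gamma/H = \mathbb{Z}^k$---one has $\tau_{\widetilde{g_j}}=\mathrm{id}$, so the two conditions coincide and the argument goes through uniformly. In a fully general non-abelian setting one should either verify this vanishing directly from $\mathbf{F}(\widetilde{\varphi})=\{e\}$ for each chosen representative, or set up the bijection between $\tau_{g_j}\varphi'$-classes of $H$ and $\varphi$-classes over $\{p(g_j)\}_{\widetilde{\varphi}}$ by hand via $h\mapsto hg_j$ and the identity $h'g_j = u(hg_j)\varphi(u^{-1})$ for $u\in H$.
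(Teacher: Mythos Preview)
The paper does not prove Lemma~\ref{lem:extensions}; it is quoted from \cite{FelHill,go:nil1,polyc} without argument. So there is no ``paper's own proof'' to compare against, and your proposal should be judged on its own.

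Your arguments for the first three items are correct and are the standard ones. For the second item your phrase ``the resulting elements all lie in a single $\varphi'$-class'' deserves one more line: one checks that the map $z\mapsto y_i z\varphi(y_i^{-1})$ sends $H$ to $H$ and carries $\varphi'$-classes to $\varphi'$-classes (using normality and $\varphi$-invariance of $H$), so $y_i\{h\}_{\varphi'}\varphi(y_i^{-1})$ is itself a single $\varphi'$-class.

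For the fourth item you have put your finger on a genuine issue. Your computation shows that the $\varphi$-classes lying over $\{p(g_j)\}_{\widetilde\varphi}$ are in bijection with the equivalence classes on $H$ under $h\sim h'\Leftrightarrow h'g_j=y(hg_j)\varphi(y^{-1})$ for some $y\in\Gamma$; this relation is a priori coarser than $(\tau_{g_j}\varphi')$-conjugacy (which corresponds to $y\in H$), and the two coincide precisely when the twisted stabiliser $\mathbf{F}(\tau_{p(g_j)}\widetilde\varphi)$ is trivial. Note that your proposed ``by hand'' alternative via $h\mapsto hg_j$ runs into exactly the same obstruction: projecting $h'g_j=y(hg_j)\varphi(y^{-1})$ forces $p(y)\in\mathbf{F}(\tau_{p(g_j)}\widetilde\varphi)$, not $\mathbf{F}(\widetilde\varphi)$, so it does not bypass the hypothesis. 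Your diagnosis that the abelian-quotient case (the only one used later, with $\Gamma/H=\mathbb{Z}^k$) makes $\tau_{p(g_j)}=\mathrm{id}$ and hence collapses the distinction is exactly right, and that is all the paper needs.
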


The following statement was obtained in \cite{Rowley1995} using the classification of finite simple groups. 
\begin{theorem}\label{teo:Rowley95}
A finite group with a fixed-point-free (i.e., regular) automorphism is solvable.
\end{theorem}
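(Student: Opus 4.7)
The plan is to argue by a minimal counterexample and reduce the problem to an assertion about non-abelian finite simple groups, which is then checked using the classification of finite simple groups (CFSG). Accordingly, suppose there exists a non-solvable finite group admitting a fixed-point-free automorphism, and let $G$ be a counterexample of minimal order, with $\varphi$ the corresponding automorphism.

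The first step is to reduce to an (almost) simple situation. The solvable radical $R(G)$ is characteristic, hence $\varphi$-invariant; the delicate point is that fixed-point-freeness does not automatically descend to quotients, so one cannot naively apply minimality to $G/R(G)$. To circumvent this, I would exploit the bijectivity of the Lang-type map $g\mapsto g^{-1}\varphi(g)$ (which is equivalent to $\mathbf{F}(\varphi)=\{e\}$) in combination with the fact that $\varphi$ permutes the chief factors of $G$. A non-solvable $G$ must possess a non-abelian chief factor, which is a direct product of isomorphic non-abelian finite simple groups; composing $\varphi$ with a suitable inner automorphism (invoking Lemma~\ref{lem:shift} to preserve Reidemeister-type information) and passing to a power that stabilizes one factor, I aim to produce a non-abelian finite simple group $S$ together with an automorphism $\psi$ of $S$ whose fixed-point set is trivial.

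The second step is to show, using CFSG, that no such pair $(S,\psi)$ exists. For $S = A_n$ with $n \geq 5$, the outer automorphism group is tiny ($S_n/A_n$, with an extra involution for $n=6$) and a direct combinatorial check produces a nontrivial fixed permutation for every $\psi$. For the 26 sporadic simple groups, one consults the character tables (ATLAS). For the finite groups of Lie type, every automorphism is a product of an inner, a diagonal, a field, and a graph automorphism; the decisive tool is the Lang--Steinberg theorem applied to the ambient connected reductive algebraic group, which produces a fixed semisimple element of $\psi$, after which a careful analysis of centralizers shows that a nontrivial fixed element already lies in the finite group $S$.

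The main obstacle is exactly this last case: the Lie-type groups require a systematic family-by-family analysis (classical and then exceptional types), handling all four flavors of automorphism simultaneously. This is the hard technical core of Rowley's paper, and it is genuinely where CFSG is indispensable: the elementary techniques that settle the case of prime-order automorphisms (Thompson, yielding nilpotency) do not extend to arbitrary composite orders, so the solvability conclusion appears to be inaccessible without the classification.
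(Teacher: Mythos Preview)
The paper does not give its own proof of this theorem: it is quoted from \cite{Rowley1995} with the remark that the argument uses the classification of finite simple groups. So there is nothing in the present paper to compare your attempt against beyond that attribution.

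Your overall architecture---minimal counterexample, reduction to a single non-abelian simple group, then a CFSG case analysis---is indeed Rowley's strategy. However, your reduction step rests on a misconception. You write that ``fixed-point-freeness does not automatically descend to quotients,'' but for $\varphi$-invariant normal subgroups it does: if $N\trianglelefteq G$ is $\varphi$-invariant and $\varphi$ is fixed-point-free on $G$, then the very Lang map you mention, $n\mapsto n^{-1}\varphi(n)$, is a bijection of $N$ onto itself; so if $g^{-1}\varphi(g)\in N$, write $g^{-1}\varphi(g)=n^{-1}\varphi(n)$ and deduce $\varphi(ng^{-1})=ng^{-1}$, hence $g=n\in N$. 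Thus the induced automorphism of $G/N$ is again fixed-point-free. With this in hand the reduction is clean: a minimal counterexample has no proper nontrivial $\varphi$-invariant normal subgroup (both the subgroup and the quotient would be smaller groups carrying fixed-point-free automorphisms, hence solvable), so it is a direct product $S_1\times\cdots\times S_n$ of copies of a non-abelian simple group, permuted transitively by $\varphi$; then $\varphi^n$ restricts to a fixed-point-free automorphism of a single factor, and one is in the CFSG case-check.

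In particular, no composition with inner automorphisms is needed, and the appeal to Lemma~\ref{lem:shift} is out of place: that lemma is about Reidemeister classes and plays no role in Rowley's purely group-theoretic argument.
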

 
Recall also the following folklore observation.
 
\begin{lemma}\label{lem:fix_fin}
For an automorphism $f:F\to F$ of a finite group, $R(f)>1$ if and only if $\mathbf{F}(f)\ne \{e\}$.
\end{lemma}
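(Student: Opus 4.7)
The plan is to analyze the Reidemeister class of the identity element directly, via the ``twisting'' map $\sigma\colon F\to F$ defined by $\sigma(y)=yf(y^{-1})$. By definition, the Reidemeister class $\{e\}_f$ equals the image $\sigma(F)$. Since $F$ is finite, the whole question reduces to whether $\sigma$ is surjective, i.e.\ bijective.

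First I would compute the fibers of $\sigma$. If $\sigma(y_1)=\sigma(y_2)$, then $y_1f(y_1^{-1})=y_2f(y_2^{-1})$, and rearranging gives $y_2^{-1}y_1=f(y_2^{-1}y_1)$, so $y_2^{-1}y_1\in \mathbf{F}(f)$. Conversely, any element of $\mathbf{F}(f)$ produces a coincidence of values. Hence the fibers of $\sigma$ are precisely the left cosets of $\mathbf{F}(f)$, and
\[
|\{e\}_f|=|\sigma(F)|=\frac{|F|}{|\mathbf{F}(f)|}.
\]

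From this the equivalence follows immediately in both directions. If $\mathbf{F}(f)=\{e\}$, then $|\{e\}_f|=|F|$, so $\{e\}_f=F$ and thus $R(f)=1$. Conversely, if $R(f)=1$, then $\{e\}_f=F$, forcing $|\mathbf{F}(f)|=1$, so $\mathbf{F}(f)=\{e\}$. Taking contrapositives yields the claimed ``if and only if'' for $R(f)>1$ and $\mathbf{F}(f)\ne\{e\}$.

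There is no serious obstacle here; the only thing to be careful about is the equivalence between injectivity and surjectivity of $\sigma$, which uses finiteness of $F$ in an essential way (and is precisely why the statement is a ``finite'' folklore fact rather than a general one).
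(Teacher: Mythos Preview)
Your proof is correct and follows essentially the same approach as the paper: both identify $\{e\}_f$ with the orbit/image of the map $y\mapsto yf(y^{-1})$ and observe that its size is $|F|/|\mathbf{F}(f)|$. The only cosmetic difference is that the paper phrases this as an application of the orbit--stabilizer theorem for the twisted action $g\colon x\mapsto gxf(g^{-1})$ (with $\mathbf{F}(f)$ as the stabilizer of $e$), whereas you compute the fibers of $\sigma$ directly.
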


\begin{proof}
Indeed, consider the Reidemeister class $\{e\}_f$ as an orbit of the twisted action of $G$ on itself:
$g: x \mapsto gx f(g^{-1})$. Then by the orbit-stabilizer theorem, $R(f)=1$, i.e. $\{e\}_f=G$, if and only if the stabilizer of $e$ under the twisted action is trivial. But $x e f(x^{-1}) =e$ if and only if $x\in \mathbf{F}(f)$.
\end{proof}

We say that Reidemeister classes of ${\varphi}:\Gamma \to \Gamma$  \emph{are separated by an epimorphism} $f: \Gamma \to F$ onto a finite 
group $F$ if $f$ induces a bijection of Reidemeister classes.

The following statement can be extracted from \cite{polyc,FelLuchTro}.
\begin{lemma}\label{lem:tbft_separ}
Let ${\varphi}:\Gamma \to \Gamma$ have $R(\varphi)<\infty$.
Then TBFT$_f$ is true for $\varphi$ if and only if Reidemeister classes of ${\varphi}$  are separated by an epimorphism $f: \Gamma \to F$ onto a finite 
group $F$.
\end{lemma}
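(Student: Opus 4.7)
The plan is to prove each direction by reducing to the classical Burnside--Frobenius theorem on a finite group. Throughout set $n = R(\varphi)$.

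\emph{Separation $\Rightarrow$ TBFT$_f$.} Suppose $f:\Gamma\twoheadrightarrow F$ has $\ker f$ a $\varphi$-invariant normal subgroup and induces a bijection between Reidemeister classes of $\varphi$ and those of the descended $\overline{\varphi}\in\operatorname{Aut}(F)$; by Lemma~\ref{lem:extensions} $R(\overline{\varphi})\le n$, and separation gives $R(\overline{\varphi})=n$. Classical Burnside--Frobenius on the finite group $F$ then produces exactly $n$ inequivalent irreducible unitary representations of $F$ fixed by $\widehat{\overline{\varphi}}$; pulling them back along $f$ yields $n$ distinct finite-dimensional $\widehat{\varphi}$-fixed irreducibles of $\Gamma$. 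A standard linear-independence argument---each $\widehat{\varphi}$-fixed fin-dim irrep $\rho$ with intertwiner $A$ produces the twisted character $\chi^\varphi_\rho(g)=\operatorname{tr}(A\rho(g))$, which is constant on Reidemeister classes, and distinct $\rho$'s give linearly independent functions on the $n$-dimensional space of class functions---bounds the total number of fin-dim fixed irreducibles above by $n$. Equality gives TBFT$_f$.

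\emph{TBFT$_f$ $\Rightarrow$ Separation.} Assume TBFT$_f$ and fix representatives $\rho_1,\dots,\rho_n$ of the fin-dim $\widehat{\varphi}$-fixed irreducibles. Their twisted characters form a basis of the $n$-dimensional function space on Reidemeister classes and hence separate those classes. Each kernel $K_i=\ker\rho_i$ is $\varphi$-invariant (since $\rho_i\circ\varphi\cong\rho_i$), so $N=\bigcap_i K_i$ is $\varphi$-invariant and every $\chi^\varphi_{\rho_i}$ factors through $\Gamma/N$; a short computation using $\rho_i(N)=\{I\}$ shows that $\Gamma/N$ already separates the Reidemeister classes. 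The remaining task is to replace $\Gamma/N$ by a finite quotient $F$ that still separates the $n$ classes; one uses that the twisted characters take only $n$ values and that these values are algebraic integers to pass to an appropriate $\varphi$-invariant, finite-index normal subgroup of $\Gamma$ whose quotient is finite and inherits separation.

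The main obstacle is the finiteness of $F$ in the converse direction, since a single fin-dim fixed irrep need not factor through any finite quotient of $\Gamma$. The construction of $F$ must therefore combine all $n$ representations $\rho_1,\dots,\rho_n$ with the hypothesis $R(\varphi)<\infty$ and the finiteness of the value sets of the twisted characters. The careful construction along these lines is carried out in \cite{polyc,FelLuchTro}, from which the lemma is extracted.
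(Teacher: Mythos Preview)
Your forward direction (separation $\Rightarrow$ TBFT$_f$) is correct and matches the paper's argument essentially verbatim: pull back the fixed irreducibles of $F$, and bound the total count from above by the linear-independence of twisted characters on the $n$-dimensional space of Reidemeister class functions.

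The converse direction has a genuine gap. You correctly isolate the difficulty---passing from the quotient $\Gamma/N$ (where $N=\bigcap_i\ker\rho_i$) to a \emph{finite} quotient---but your proposed mechanism does not work. The fact that each twisted character $\chi^\varphi_{\rho_i}$ takes only $n$ values, and that these values are algebraic integers, says nothing about finiteness of $\rho_i(\Gamma)$: it is the \emph{untwisted} matrix entries that would have to be controlled, and they are not constant on Reidemeister classes. There is no way to manufacture a finite-index $\varphi$-invariant normal subgroup from the finiteness of the twisted-character value sets alone.

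The paper's proof uses a different and sharper input from \cite{polyc,FelLuchTro}: if $R(\varphi)<\infty$ and $[\rho]$ is a finite-dimensional $\widehat{\varphi}$-fixed irreducible, then $\rho$ is a \emph{finite} representation, i.e.\ $\rho(\Gamma)$ is a finite group. Granting this, each $\rho_i$ factors through an epimorphism $f_i:\Gamma\to F_i$ with $F_i$ finite, and one simply takes $F=F_1\times\cdots\times F_n$ with $f=(f_1,\dots,f_n)$. The pulled-back representations $\widetilde{\rho}_i\circ p_i$ are still fixed and inequivalent, so $R(\varphi_F)\ge n=R(\varphi)\ge R(\varphi_F)$, giving separation. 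You should replace your ``algebraic integers'' sketch with this fact; it is the actual content being extracted from the cited references.
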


\begin{proof}
Indeed, each equivalence class of irreducible unitary finite-dimensional representation $[\rho]$,
that is $\widehat{\varphi}$-fixed,
 gives rise to a function
$g\mapsto \mathrm{Trace} (V \rho(g))$, where $V$ is the unitary intertwining operator between
$\rho$ and $\rho\circ\varphi$.
This function is constant on Reidemeister classes. 
It is a matrix coefficient and these functions are the only matrix coefficients that are constant
on Reidemeister classes. To see this it is sufficient to represent an arbitrary matrix coefficient
as a twisted trace and obtain that the twist is given by an intertwining operator.  
For distinct representation classes these matrix coefficients are linearly independent. 
Thus, the number of twisted conjugacy classes $R(\varphi)$ is more or equal to the number of $\widehat{\varphi}$-fixed unitary irreducible finite-dimensional representations $\mathrm{Fix}(\widehat{\varphi})$
(here we denote by $\widehat{\varphi}$ the induced homeomorphism of the finite-dimensional part of the unitary dual).

If Reidemeister classes are separated by $f$, then $R(\varphi)=R(\varphi_F)$, where  
$\varphi_F:F\to F$ is the induced automorphism. Then 
$
\mathrm{Fix}(\widehat{\varphi}) \le R(\varphi)=R(\varphi_F) =
\mathrm{Fix}(\widehat{\varphi_F}) \le \mathrm{Fix}(\widehat{\varphi}). 
$

Conversely, suppose that $R(\varphi)=|\mathrm{Fix}(\widehat{\varphi})|$   and $\mathrm{Fix}(\widehat{\varphi})=
\{\rho_1,\dots,\rho_r\}$.
Since $R(\varphi)$ is finite, each $\rho_i$ is finite, i.e. factorizes through $f_i: \Gamma \to F_i$,
$\rho_i=\widetilde{\rho}_i \circ f_i$, for some finite group $F_i$.
Then the map $f=f_1\oplus\cdots \oplus f_r; \Gamma \to F_1\oplus\cdots \oplus F_r=F$
separates Reidemeister classes because, for natural projections $p_i :F\to F_i$, the representations
$ \widetilde{\rho}_i \circ p_i$ of $F$ are fixed and non-equivalent. Thus
$
R( \varphi)\ge R(\varphi_F)\ge r = R( \varphi),
$
i.e., $f$ separates classes. This completes the proof.
\end{proof}


We have by definition, $F\wr \mathbb{Z}^k = \Sigma \rtimes_\alpha \mathbb{Z}^k$,  where  
$\Sigma$ denotes $\oplus_{x\in \mathbb{Z}^k} F_x$, and $\alpha(x)(g_y) =g_{x+y}$. Here $g_x$ 
denotes $g$ as an element of $F \cong F_x$.

We need a description of  automorphisms of a semidirect product $H \rtimes K$, where
$H$ is characteristic, as matrices $\begin{pmatrix}
a & b \\
0 & d
\end{pmatrix}$, where $a \in \operatorname{Aut}(H)$, 
$d \in \operatorname{Aut}(K)$, $b: K\to H$ satisfy 
\begin{itemize}
\item[(i)]  $b(k k')=b(k) \tau_{d(k)} (b(k'))$ for any $k, k'\in K$,
\item[(ii)] $a(\tau_k(h))=\tau_{b(k)d(k)}(a(h))$ for any $h\in H$, $k\in K$,
\end{itemize}
(see \cite[Theorem~1]{Curran2008}).
In contrast with the abelian case we can not reduce the consideration to the case $b=0$ (i.e. $b(k)=e_H$ for any $k\in K$).

Hence, since $\Sigma$ is characteristic in $\Gamma$ (as its torsion subgroup),  
we see, that 
an automorphism $\varphi$ can be defined by $a=\varphi':\Sigma \to \Sigma$, $d=\overline{\varphi}:
\mathbb{Z}\to \mathbb{Z}$ and $b: {\mathbb Z}^k \to \Sigma$ restricted to satisfy (in particular)
\begin{equation}\label{eq:maineq}
	\varphi'(\alpha(m)(h))=b(m)[\alpha(\overline{\varphi}(m))(\varphi'(h))] b(-m),\qquad h\in\Sigma,\quad m\in {\mathbb Z}^k.
\end{equation}
Then 
$
	(\varphi')^2(\alpha(m)(h))=\varphi'(b(m)) b(m)[\alpha(\overline{\varphi}^2(m))((\varphi')^2 (h))] b(-m)
\varphi'(b(-m)),
$
\begin{equation}\label{eq:maineq_deg}
	(\varphi')^q (\alpha(m)(h))=\tau_{\beta(q)} [\alpha(\overline{\varphi}^q(m))((\varphi')^q (h))]\mbox{ for }
\beta(q):=(\varphi')^{q-1}(b(m)) \cdots\varphi'(b(m)) b(m). 
\end{equation}

\begin{definition}
\rm For $\omega = \sum_m g_m \in \Sigma$,
denote the  \emph{support in ${\mathbb Z}^k$ of }$\omega$
by $\operatorname{supp}_{\mathbb{Z}^k}  (\omega)= \{m\in \mathbb{Z}^k \colon g_m \ne e\}$.
\end{definition}
 Since $\tau_g(g')=e$ if and only if $g'=e$, $g,g'\in F$, one has
\begin{equation}\label{eq:supports}
\operatorname{supp}_{\mathbb{Z}^k} (\tau_{\beta(q)} [\alpha(\overline{\varphi}^q(m))((\varphi')^q (h))])
=
\operatorname{supp}_{\mathbb{Z}^k} (\alpha(\overline{\varphi}^q(m))((\varphi')^q (h))).
\end{equation}

The following statement was discussed in \cite{gowon1,FraimanTroitsky} in the abelian case.

\begin{lemma}\label{lem:R_needed}
An authomorphism $\varphi: F\wr \mathbb{Z}^k \to F \wr \mathbb{Z}^k$, where $|F|<\infty$, has $R(\varphi)<\infty$ if and only if 
$R(\overline{\varphi})< \infty$ and $R(\tau_m \circ \varphi')<\infty$ for any $m \in  \mathbb{Z}^k$, 
where $\varphi': \oplus_m F_m \to \oplus_m F_m$ and $\overline{\varphi}: \mathbb{Z}^k
\to \mathbb{Z}^k$ are induced by $\varphi$
(in fact, it is sufficient to verify this for representatives of Reidemeister classes of $\overline{\varphi}$).
\end{lemma}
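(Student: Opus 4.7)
The plan is to derive this lemma as a direct consequence of the fourth bullet of Lemma \ref{lem:extensions}, applied to the characteristic subgroup $\Sigma=\bigoplus_{m\in\mathbb{Z}^k} F_m$ of $\Gamma=F\wr\mathbb{Z}^k$ with quotient $\mathbb{Z}^k$. The ``only if'' implication immediately gives $R(\overline{\varphi})<\infty$ from the first bullet of Lemma \ref{lem:extensions}, and the crucial preliminary observation is that whenever $R(\overline{\varphi})<\infty$ holds, the hypothesis $\mathbf{F}(\overline{\varphi})=\{0\}$ needed for the fourth bullet is automatic: $\mathbb{Z}^k$ is finitely generated and residually finite, so Lemma \ref{lem:Jab_fin} forces $\mathbf{F}(\overline{\varphi})$ to be finite, and any finite subgroup of the torsion-free group $\mathbb{Z}^k$ is trivial.

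Having secured this, I would fix any representatives $m_1,\dots,m_R$ of the Reidemeister classes of $\overline{\varphi}$ and lift them to $g_j:=(e,m_j)\in\Gamma$. A short semidirect product computation shows that $\tau_{g_j}|_\Sigma$ coincides with the shift $\alpha(m_j)$, which in the notation of the statement is exactly $\tau_{m_j}$. The fourth bullet of Lemma \ref{lem:extensions} then yields
\[
R(\varphi)=\sum_{j=1}^{R(\overline{\varphi})} R(\tau_{m_j}\circ\varphi'),
\]
and this one identity handles both implications simultaneously: if $R(\overline{\varphi})$ and every $R(\tau_{m_j}\circ\varphi')$ are finite, then so is $R(\varphi)$; and conversely $R(\varphi)<\infty$ forces each summand on the right to be finite.

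To strengthen the conclusion from representatives to every $m\in\mathbb{Z}^k$, I would exploit the arbitrariness of the representatives in the fourth bullet of Lemma \ref{lem:extensions}: given any $m\in\mathbb{Z}^k$, choose a set of representatives containing $m$, whence $R(\tau_m\circ\varphi')$ appears as a summand and is therefore bounded by $R(\varphi)$. Alternatively, Lemma \ref{lem:shift} yields $R(\tau_{(e,m)}\circ\varphi)=R(\varphi)$, and running the same decomposition on this conjugated automorphism produces the formula with representatives shifted by $m$. I do not anticipate any serious obstacle; the only delicate bookkeeping point is to match the two meanings of $\tau_m$---the outer shift $\alpha(m)$ of $\Sigma$ on the subgroup side versus the restriction to $\Sigma$ of inner conjugation by $(e,m)$ in $\Gamma$---which is an immediate semidirect product computation.
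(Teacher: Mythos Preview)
Your proposal is correct and follows essentially the same route as the paper: both arguments hinge on deducing $\mathbf{F}(\overline{\varphi})=\{0\}$ from $R(\overline{\varphi})<\infty$ via Lemma~\ref{lem:Jab_fin} and torsion-freeness of $\mathbb{Z}^k$, and then invoking Lemma~\ref{lem:extensions}. The only cosmetic difference is that the paper uses the second bullet of Lemma~\ref{lem:extensions} (together with the substitution $\varphi\mapsto\tau_z\circ\varphi$) for the forward implication and the fourth bullet for the converse, whereas you run the fourth-bullet sum formula in both directions; your choice of the specific lifts $g_j=(e,m_j)$ also lets you bypass the reduction $R(\tau_{\sigma s}\varphi')=R(\tau_s\varphi')$ that the paper needs for arbitrary lifts.
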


\begin{proof}
Suppose, $R(\varphi)<\infty$. By Lemma \ref{lem:extensions}, we have $R(\overline{\varphi})<\infty$. Then by Lemma  \ref{lem:Jab_fin}, we obtain
$|\mathbf{F}(\overline{\varphi})|<\infty$ (in fact, $|\mathbf{F}(\overline{\varphi})|=1$, because an automorphism of $\mathbb{Z}^k$ can not have finitely many fixed elements except of $0$). So, by Lemma \ref{lem:extensions}, $R(\varphi')<\infty$. Considering $\tau_z \circ \varphi$, which has $R(\tau_z \circ \varphi)=R(\varphi)<\infty$, instead of $\varphi$, we obtain in the same way that $R(\tau_z \circ \varphi')<\infty$.

 Conversely, having  $|\mathbf{F}(\overline{\varphi})|=1$, one can apply Lemma \ref{lem:extensions} (the formula from the last item) and use the equality $R(\tau_{\sigma s} \varphi')=R(\tau_{s} \varphi')$, where
$\sigma \in  \oplus_m F_m$, $s\in \mathbb{Z}^k$, see Lemma \ref{lem:shift}.
\end{proof}

To prove the main results, we need the following restriction
\begin{equation}\label{eq:condfinord}
\operatorname{order}(\varphi')=\operatorname{order}(\overline{\varphi})=:s.
\end{equation}
In many cases it is proved that this condition is fulfilled automatically \cite{TroLamp,troitskii2023reidemeister550080588}.

\begin{theorem}\label{teo:main_for_TBFT}
Suppose that $\varphi$ is an automorphism  of finite order of the restricted wreath product $G\wr {\mathbb Z}^k=\oplus_{m\in {\mathbb Z}^k} G_m \rtimes_\alpha {\mathbb Z}^k$, where $G$ is a finite group, with
(\ref{eq:condfinord}). If $R(\varphi')<\infty$,  then $R(\varphi')=1$.
\end{theorem}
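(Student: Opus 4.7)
The plan is to deduce $R(\varphi')=1$ from $R(\varphi')<\infty$ in two stages: first establish $\mathbf{F}(\varphi')=\{e\}$, then lift from a trivial fixed subgroup to a trivial Reidemeister spectrum via local finiteness of $\Sigma$.

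First, the group $\Sigma=\oplus_{m\in\mathbb{Z}^k}G_m$ is both locally finite (every finitely generated subgroup lies in some $\oplus_{m\in S}G_m$ with $S\subset\mathbb{Z}^k$ finite, and hence is finite) and residually finite (the subgroups $\oplus_{m\notin S}G_m$ for finite $S$ are normal of finite index and have trivial intersection). Condition~(\ref{eq:condfinord}) makes $\varphi'$ of finite order $s$, so Lemma~\ref{lem:Jab_fin_ord} applies and yields $|\mathbf{F}(\varphi')|<\infty$.

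The main step is strengthening this to $\mathbf{F}(\varphi')=\{e\}$. Suppose, for contradiction, that some nontrivial $\omega\in\mathbf{F}(\varphi')$ exists. For $m\in\mathbf{F}(\overline{\varphi})\subset\mathbb{Z}^k$, equation~(\ref{eq:maineq}) gives $\varphi'(\alpha(m)(\omega))=b(m)\,\alpha(m)(\omega)\,b(m)^{-1}$, so $\alpha(m)(\omega)$ is again $\varphi'$-fixed whenever $b(m)$ centralizes $\alpha(m)(\omega)$; since distinct $m$ give translates with distinct supports $\operatorname{supp}(\omega)+m$, producing infinitely many such $m$ would contradict $|\mathbf{F}(\varphi')|<\infty$. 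When $\mathbf{F}(\overline{\varphi})=\{0\}$ the translation trick is unavailable and one instead argues from (\ref{eq:supports}) (which forces $\operatorname{supp}_{\mathbb{Z}^k}(\omega)$ to be a union of $\overline{\varphi}$-orbits) together with condition~(\ref{eq:condfinord}); the consequence $\beta(s)\in Z(\Sigma)$ of the $q=s$ case of (\ref{eq:maineq_deg}) is used to transplant $\omega$ onto parallel orbits, again producing infinitely many fixed elements. This support/centralization case analysis is the principal technical obstacle.

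Once $\mathbf{F}(\varphi')=\{e\}$ is known, the conclusion is quick. For arbitrary $\omega\in\Sigma$, the subgroup $\Sigma_\omega:=\langle\omega,\varphi'(\omega),\ldots,(\varphi')^{s-1}(\omega)\rangle$ is $\varphi'$-invariant and finite (by local finiteness). Hence $\mathbf{F}(\varphi'|_{\Sigma_\omega})=\mathbf{F}(\varphi')\cap\Sigma_\omega=\{e\}$, and Lemma~\ref{lem:fix_fin} applied to the finite group $\Sigma_\omega$ yields $R(\varphi'|_{\Sigma_\omega})=1$. Thus $\omega$ is Reidemeister-equivalent to $e$ inside $\Sigma_\omega$, and a fortiori inside $\Sigma$. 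Since $\omega\in\Sigma$ was arbitrary, $R(\varphi')=1$.
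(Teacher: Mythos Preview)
Your Stages~1 and~3 are correct and essentially match the paper (Stage~3 is in fact the same reduction as the paper's opening move, just run in the other direction). The gap is Stage~2: you never actually build infinitely many $\varphi'$-fixed elements from a single nontrivial one.

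In your first case you need $b(m)$ to centralize $\alpha(m)(\omega)$, but you do not verify this. Since $b$ satisfies the cocycle relation~(i), for $m=nm_0$ with $m_0\in\mathbf{F}(\overline{\varphi})$ the support of $b(nm_0)$ spreads along $\{0,m_0,\dots,(n-1)m_0\}+\operatorname{supp}_{\mathbb{Z}^k}(b(m_0))$ and there is no reason it misses $nm_0+\operatorname{supp}_{\mathbb{Z}^k}(\omega)$. In your second case you only write ``transplant $\omega$ onto parallel orbits''; the observation $\beta(s)\in Z(\Sigma)$ is correct (take $q=s$ in (\ref{eq:maineq_deg}) and use that $\alpha(m)$ is onto), but you never exhibit a concrete new fixed element from it. You yourself flag this step as ``the principal technical obstacle'' --- and indeed it is not discharged.

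What is missing is the paper's averaging construction, which avoids the case split entirely. Choose $m\in\mathbb{Z}^k$ whose $\overline{\varphi}$-orbit has the full length $s$ (such $m$ exists because $\operatorname{order}(\overline{\varphi})=s$). By (\ref{eq:maineq_deg}) and (\ref{eq:supports}), together with $\varphi'(\omega)=\omega$, one has
\[
\operatorname{supp}_{\mathbb{Z}^k}\bigl((\varphi')^j(\alpha(nm)(\omega))\bigr)=\overline{\varphi}^{\,j}(nm)+\operatorname{supp}_{\mathbb{Z}^k}(\omega),
\]
and for $n$ large these sets are pairwise disjoint for $j=0,\dots,s-1$. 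The factors then commute, so
\[
\sigma_n:=\prod_{j=0}^{s-1}(\varphi')^j\bigl(\alpha(nm)(\omega)\bigr)
\]
is nontrivial and $\varphi'$-fixed (applying $\varphi'$ cyclically permutes the factors). Letting $n$ run through a sparse sequence gives infinitely many distinct fixed elements, contradicting Lemma~\ref{lem:Jab_fin_ord}. Once you insert this construction, your Stage~2 closes and the rest of your argument goes through.
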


\begin{proof} 
	Suppose, $R(\varphi')>1$. Then there exists an element $\sigma\in \Sigma$ such that
	$\sigma\not \in \{ e\}_{\varphi'}$. Hence $\sigma\not \in \{ e\}_{\varphi'_\sigma}$,
	where $\varphi'_\sigma$ is the restriction of $\varphi'$ onto the $\varphi'$-invariant subgroup $\Sigma_\sigma$ generated by $\sigma$. This follows from the evident observation
$$
 \{ e\}_{\varphi'_\sigma}=\{g (\varphi'_\sigma)^{-1} (g) \colon g \in \Sigma_\sigma\}
=\{g (\varphi')^{-1} (g) \colon g \in \Sigma_\sigma\} \subseteq \{g (\varphi')^{-1} (g) \colon g \in \Sigma \}= \{ e\}_{\varphi'}.
$$
In particular, $R(\varphi'_\sigma)>1$.   
Then by the definition, $\Sigma_\sigma$ is a finite group with generators $\sigma,\varphi'(\sigma),\dots,(\varphi')^s(\sigma)$  (because $\Sigma$ is locally finite). Hence, $\varphi'_\sigma$ has
	a nontrivial fixed element $\sigma_0$, $\varphi'_\sigma(\sigma_0)=\sigma_0$ and
	$\sigma_0\ne 0$ (see Lemma \ref{lem:fix_fin}).  Consider an element $m\in{\mathbb Z}^k$
from an orbit of maximal length $s$, i.e. $\overline{\varphi}^s(m)=m$ and 
$\overline{\varphi}^j(m)$ are pairwise distinct for $j=0,\dots,s-1$. 
Passing from $m$ to $n_1 m$,
	$n_1 \in {\mathbb Z}$, $m\in {\mathbb Z}^k$, if necessary, we can assume that the supports
	$\operatorname{supp}_{\mathbb{Z}^k}(\alpha(\overline{\varphi}^j(n_1 m))\sigma_0)$, $j=0,\dots,t$, do not intersect. Indeed, since the elements $\overline{\varphi}^j(m) \in \mathbb{Z}^k$ are distinct, $j=0,\dots,s-1$, and we can take a large $n_1$ such that
\begin{eqnarray*}
\|\overline{\varphi}^j(n_1 m) - \overline{\varphi}^l (n_1 m)\| &=&
\|n_1 \overline{\varphi}^j(m) - n_1 \overline{\varphi}^l (m)\| =\\
&=&|n_1| \cdot \|\overline{\varphi}^j(m) - \overline{\varphi}^l (m)\| > 2 \operatorname{diam}
(\operatorname{supp}_{\mathbb{Z}^k}( \sigma_0)).
\end{eqnarray*}
Then, by (\ref{eq:maineq_deg}) and (\ref{eq:supports}) the supports $\operatorname{supp}_{\mathbb{Z}^k}( (\varphi')^j(\alpha(n_1 m)\sigma_0)$ do not intersect, $j=0,\dots,s-1$. In particular, these elements commute
and $\sigma_1=\prod_{j=0}^{s-1} (\varphi')^j(\alpha(n_1 m)\sigma_0$ is a $\varphi'$-fixed element. Moreover,
$\sigma_1\ne e$, $\sigma_1\ne \sigma_0$.
Increasing $n=n_1,n_2,\dots$ ``in sufficiently large steps'' we obtain infinitely many distinct fixed elements in the same way.	Then by Lemma \ref{lem:Jab_fin_ord}, $R(\varphi')=\infty$. A contradiction.
\end{proof}	

\begin{theorem}\label{teo:main_TBFT}
Suppose that $\varphi$ is an automorphism  of finite order of the restricted wreath product $G\wr {\mathbb Z}^k=\oplus_{m\in {\mathbb Z}^k} G_m \rtimes_\alpha {\mathbb Z}^k$, where $G$ is a finite group and
(\ref{eq:condfinord}) is fulfilled. Then $\varphi$ has the TBFT$_f$ property. 
\end{theorem}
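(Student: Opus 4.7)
The plan is to invoke the criterion of Lemma \ref{lem:tbft_separ}: we may assume $R(\varphi)<\infty$ (otherwise there is nothing nontrivial to show) and exhibit a separating finite quotient. The natural candidate is the composition $\Gamma \twoheadrightarrow \mathbb{Z}^k \twoheadrightarrow \mathbb{Z}^k/(1-\overline\varphi)\mathbb{Z}^k$; its target is a finite group of order $R(\overline\varphi)$ which separates the $\overline\varphi$-classes (abelian case). Since $\mathbf{F}(\overline\varphi)=\{0\}$ (as $R(\overline\varphi)\le R(\varphi)<\infty$ forces $\det(1-\overline\varphi)\ne 0$), Lemma \ref{lem:extensions} gives $R(\varphi)=\sum_{j=1}^{R(\overline\varphi)}R(\tau_{g_j}\varphi')$, so separation is equivalent to proving $R(\tau_{g_j}\varphi')=1$ for each $j$. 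Writing $g_j=\sigma_j z_j$ with $\sigma_j\in\Sigma,\ z_j\in\mathbb{Z}^k$, one has $\tau_{g_j}|_\Sigma=\tau_{\sigma_j}\circ\alpha(z_j)$, and Lemma \ref{lem:shift} further reduces the task to showing $R(\alpha(z)\varphi')=1$ for every $z\in\mathbb{Z}^k$.

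The strategy is to apply the argument of Theorem \ref{teo:main_for_TBFT} to $\psi_z:=\alpha(z)\varphi'$ as an automorphism of $\Sigma$, after establishing that it has finite order. Iterating the relation $\varphi'\circ\alpha(w)=\tau_{b(w)}\circ\alpha(\overline\varphi(w))\circ\varphi'$ implied by (\ref{eq:maineq}) yields $\psi_z^s=\tau_\eta\circ\alpha(N(z))\circ(\varphi')^s$, with $N:=1+\overline\varphi+\cdots+\overline\varphi^{s-1}$ and $\eta\in\Sigma$. Since $\overline\varphi(N(z))=N(z)$, we have $N(\mathbb{Z}^k)\subseteq\mathbf{F}(\overline\varphi)=\{0\}$; combined with $(\varphi')^s=\mathrm{id}$, this gives $\psi_z^s=\tau_\eta$, an inner automorphism of the locally finite group $\Sigma$ by a torsion element, hence of finite order, so $\psi_z$ itself is of finite order. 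I would then rerun the proof of Theorem \ref{teo:main_for_TBFT}: if $R(\psi_z)>1$, find a nontrivial $\psi_z$-fixed element $\sigma_0$ in the finite $\psi_z$-invariant subgroup generated by some $\sigma\notin\{e\}_{\psi_z}$ (via Lemma \ref{lem:fix_fin}). The relation $\psi_z(\sigma_0)=\sigma_0$ gives $(\varphi')^j(\sigma_0)=\alpha(-c_j)(\sigma_0)$ with $c_j:=(1+\overline\varphi+\cdots+\overline\varphi^{j-1})(z)$, so an analog of (\ref{eq:maineq_deg})--(\ref{eq:supports}) for $\psi_z$ yields $\operatorname{supp}_{\mathbb{Z}^k}(\psi_z^j(\alpha(n_1 m)\sigma_0))=\overline\varphi^j(n_1 m)+\operatorname{supp}_{\mathbb{Z}^k}(\sigma_0)$ (the $c_j$-shifts cancel). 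For $m$ with full $\overline\varphi$-orbit and $n_1$ large, these supports for $j=0,\dots,s-1$ are pairwise disjoint and $\psi_z^s(\alpha(n_1 m)\sigma_0)=\tau_\eta(\alpha(n_1 m)\sigma_0)=\alpha(n_1 m)\sigma_0$ (since $\eta$'s support is disjoint from $\alpha(n_1 m)\sigma_0$'s for $n_1$ large). Thus $\sigma_1=\prod_{j=0}^{s-1}\psi_z^j(\alpha(n_1 m)\sigma_0)$ is a new nontrivial $\psi_z$-fixed element, and iteration in $n_1$ produces infinitely many, contradicting $|\mathbf{F}(\psi_z)|<\infty$ from Lemma \ref{lem:Jab_fin_ord}.

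The principal obstacle is the precise bookkeeping of supports: one must carefully derive $\psi_z^q(\alpha(m)h)=\tau_{\gamma(q)}\circ\alpha(T^q(m))\circ(\varphi')^q(h)$ (with $T(m):=\overline\varphi(m)+z$), use the $\psi_z$-fixedness of $\sigma_0$ to collapse $T^q$ back to $\overline\varphi^q$ in the support expression, and check that the bounded contributions from $\gamma(q)$ and $\eta$ do not interfere for $n_1$ sufficiently large. Once $R(\tau_{g_j}\varphi')=1$ is proven for each $j$, $R(\varphi)=R(\overline\varphi)$ follows, the composed quotient $\Gamma\to\mathbb{Z}^k/(1-\overline\varphi)\mathbb{Z}^k$ separates all Reidemeister classes of $\varphi$, and Lemma \ref{lem:tbft_separ} yields TBFT$_f$ for $\varphi$.
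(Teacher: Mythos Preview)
Your proposal is correct and follows essentially the same route as the paper: reduce TBFT$_f$ via Lemma~\ref{lem:tbft_separ} to separating the $\varphi$-classes through the abelian quotient $\mathbb{Z}^k$, which amounts (by Lemma~\ref{lem:extensions} with $\mathbf{F}(\overline\varphi)=\{0\}$) to proving $R(\tau_z\circ\varphi')=1$ for every $z\in\mathbb{Z}^k$, and then invoke the argument of Theorem~\ref{teo:main_for_TBFT}. The paper does the last step in a single sentence, by ``considering $\tau_z\circ\varphi$ instead of $\varphi$ from the very beginning''; you instead rerun the fixed-point construction for $\psi_z=\alpha(z)\varphi'$ explicitly, and in doing so you notice and resolve a point the paper passes over: condition~(\ref{eq:condfinord}) need not hold for $\tau_z\circ\varphi$ (one only has $\psi_z^s=\tau_\eta$, not $\psi_z^s=\mathrm{id}$), but since $\tau_\eta$ acts trivially on elements with support disjoint from $\operatorname{supp}(\eta)$, the product $\prod_{j=0}^{s-1}\psi_z^j(\alpha(n_1 m)\sigma_0)$ is still $\psi_z$-fixed for $n_1$ large. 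Your support computation (the $c_j$-shifts cancelling because $\sigma_0$ is $\psi_z$-fixed) is correct, so your version is a slightly more detailed execution of the same proof.
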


\begin{proof} 
By Lemma \ref{lem:R_needed}, $R(\varphi)<\infty$ implies
$R(\varphi')<\infty$. Then Theorem \ref{teo:main_for_TBFT} implies that $R(\varphi')=1$.
Considering $\tau_z \circ \varphi$ instead of $\varphi$ from the very beginning, we see that  $R(\tau_z \circ \varphi')=1$, for any $z\in {\mathbb Z}^k$.
Thus, by Lemma \ref{lem:extensions}, Reidemeister classes $\{g\}_\varphi$ of $\varphi$ are pull-backs of Reidemeister classes
$\{z\}_{\overline{\varphi}} $ of $\overline{\varphi}$ under the natural projection $\pi:G\wr {\mathbb Z}^k \to {\mathbb Z}^k$, i.e. $\{g\}_\varphi=\pi^{-1}(\{\pi(g)\}_{\overline{\varphi}})$.  So, if classes of $\overline{\varphi}$  are separated by an epimorphism $f: {\mathbb Z}^k \to A$ onto a finite 
abelian group $A$, 
then classes of $\varphi$ are separated by $f\circ\pi$. 
Thus by Lemma \ref{lem:tbft_separ}, the statement follows from TBFT$_f$ for abelian groups \cite{FelHill} (see also \cite{polyc}).
\end{proof}

Now we pass to a proof of conjecture (R) in the situation under consideration.

\begin{theorem}\label{teo:ja_wr}
Suppose that  a restricted wreath product $\Gamma=G\wr \mathbb{Z}^k$, where $G$ is an arbitrary finite group,
admits an automorphism $\varphi$ of finite order with $R(\varphi)<\infty$, for which
(\ref{eq:condfinord}) is fulfilled. Then
$\Gamma$ is solvable.
\end{theorem}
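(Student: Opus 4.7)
The plan is to reduce the theorem to showing that the finite group $G$ itself is solvable, since if $G$ is solvable of derived length $d$, then $\Sigma=\bigoplus_m G_m$ satisfies $\Sigma^{(d)}=\bigoplus_m G_m^{(d)}=\{e\}$ (commutators in a direct sum factor coordinatewise), and then $\Gamma^{(d+1)}\subseteq \Sigma^{(d)}=\{e\}$ because the quotient $\Gamma/\Sigma=\mathbb{Z}^k$ is abelian; thus $\Gamma$ is solvable. So the real content will be to produce, from $\varphi$, a fixed-point-free automorphism of a finite group containing a copy of $G$, and invoke Theorem \ref{teo:Rowley95}.

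First I would extract that $\varphi'$ is fixed-point-free on $\Sigma$. By Lemma \ref{lem:R_needed} one has $R(\varphi')<\infty$; by Theorem \ref{teo:main_for_TBFT} (using \eqref{eq:condfinord}) in fact $R(\varphi')=1$. Since $\varphi'$ has finite order and $\Sigma$ is residually finite (being a subdirect product of copies of the finite group $G$), Lemma \ref{lem:Jab_fin_ord} gives $|\mathbf{F}(\varphi')|<\infty$.

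Next I would show that $\mathbf{F}(\varphi')=\{e\}$ by re-running the construction from the proof of Theorem \ref{teo:main_for_TBFT} starting from a hypothetical nontrivial $\sigma_0\in \mathbf{F}(\varphi')$. Since $\overline{\varphi}$ has order $s$ on $\mathbb{Z}^k$ with $\mathbf{F}(\overline{\varphi})=\{0\}$, there exists $m\in \mathbb{Z}^k$ whose $\overline{\varphi}$-orbit has full length $s$. For $n$ large enough, the translates $\overline{\varphi}^j(nm)+\operatorname{supp}(\sigma_0)$, $j=0,\dots,s-1$, become pairwise disjoint, so by \eqref{eq:maineq_deg} and \eqref{eq:supports} the elements $(\varphi')^j(\alpha(nm)\sigma_0)$ have disjoint supports, hence commute, and their product
$$
\xi_n = \prod_{j=0}^{s-1} (\varphi')^j(\alpha(nm)\sigma_0)
$$
is nontrivial and satisfies $\varphi'(\xi_n)=\xi_n$ by the cyclic shift $(\varphi')^s=\mathrm{id}$. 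Choosing $n$ along a sufficiently sparse sequence produces infinitely many distinct elements $\xi_n\in\mathbf{F}(\varphi')$, contradicting finiteness. Hence $\mathbf{F}(\varphi')=\{e\}$.

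Having obtained that $\varphi'$ is fixed-point-free, I would conclude as follows. Since $\varphi'$ has order $s$ and $\Sigma$ is locally finite (a direct sum of finite groups), the subgroup
$
\widetilde{G_0}=\langle G_0,\varphi'(G_0),\dots,(\varphi')^{s-1}(G_0)\rangle\subseteq \Sigma
$
is $\varphi'$-invariant and finite, and $\varphi'|_{\widetilde{G_0}}$ is a fixed-point-free automorphism of a finite group. Theorem \ref{teo:Rowley95} then yields that $\widetilde{G_0}$ is solvable, hence so is its subgroup $G=G_0$; combined with the opening reduction this shows $\Gamma$ is solvable. The main obstacle is the third paragraph: one must verify carefully that the support-disjointness argument of Theorem \ref{teo:main_for_TBFT} goes through verbatim when the starting element is already globally $\varphi'$-fixed, and that varying $n$ yields pairwise distinct $\xi_n$; the rest is a direct application of results collected at the start of the paper.
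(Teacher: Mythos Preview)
Your proof is correct and follows essentially the same route as the paper's: reduce to solvability of $G$, build the finite $\varphi'$-invariant subgroup generated by $G_0$, use the support-separation construction from the proof of Theorem~\ref{teo:main_for_TBFT} to show $\varphi'$ is fixed-point-free on it, and apply Theorem~\ref{teo:Rowley95}. The only cosmetic differences are that you spell out the fixed-point-freeness argument on all of $\Sigma$ before restricting (whereas the paper just cites ``the key step'' of Theorem~\ref{teo:main_for_TBFT}), and you conclude $G_0$ solvable as a subgroup of $\widetilde{G_0}$ rather than as its image under the coordinate projection $p_0$; both are valid and the invocation of $R(\varphi')=1$ in your second paragraph is not actually needed for what follows.
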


\begin{proof}
Similarly to the proof of Theorem \ref{teo:main_TBFT}, consider the $\varphi'$-invariant  subgroup $\Sigma_0\subset \Sigma$ generated by $G_0$. This is a finite subgroup generated by
$
G_0, \varphi' (G_0), \dots,  (\varphi')^{s-1} (G_0).
$ 
Then the restriction $\varphi'_0: \Sigma_0 \to \Sigma_0$ has no fixed elements as it is proved above (as the key step of the proof of Theorem \ref{teo:main_TBFT}).
Then by Theorem \ref{teo:Rowley95} the group $\Sigma_0$ is solvable. Then $G_0$ is the image of the composition
$
\Sigma_0 \hookrightarrow \Sigma \xrightarrow {p_0} G_0,
$
where $p_0$ is the natural projection. 
Hence $G_0$ is solvable. Then $\Sigma$ and $[\Gamma,\Gamma]\subseteq \Sigma$ are solvable. Thus, $\Gamma$ is solvable. 
\end{proof}

\begin{remark}
\rm An important observation related these groups is that typically
the subgroup $\Sigma$ does \emph{not} satisfy neither TBFT nor TBFT$_f$ as it was proved in \cite{FeTrZi20}
(of course with failure for \emph{another} automorphism then $\varphi'$).
\end{remark}

The author is indebted to Professor A.~A.~Klyachko for valuable criticism of the first version of the paper.

The work is supported by the Russian Science Foundation under grant 21-11-00080.

\def\cprime{$'$} \def\cprime{$'$} \def\cprime{$'$} \def\cprime{$'$}
  \def\cprime{$'$} \def\cprime{$'$} \def\cprime{$'$} \def\dbar{\leavevmode\hbox
  to 0pt{\hskip.2ex \accent"16\hss}d}
  \def\polhk#1{\setbox0=\hbox{#1}{\ooalign{\hidewidth
  \lower1.5ex\hbox{`}\hidewidth\crcr\unhbox0}}} \def\cprime{$'$}
  \def\cprime{$'$} \def\cprime{$'$}

\end{document}